\pgfplotsset{compat=1.18}
\newtheorem{thm}{Theorem}[section]
\newtheorem{prop}[thm]{Proposition}
\newtheorem{cor}[thm]{Corollary}
\newtheorem{lemma}[thm]{Lemma}
\theoremstyle{definition}
\newtheorem{example}[thm]{Example}
\newtheorem{defn}[thm]{Definition}
\DeclareMathOperator{\sign}{sign}
\DeclareMathOperator{\asc}{asc}
\DeclareMathOperator{\sh}{sh}
\DeclareMathOperator{\wt}{wt}
\definecolor{lblue}{RGB}{43, 188, 237}
\definecolor{dgreen}{RGB}{0, 166, 11}
\definecolor{dbrown}{RGB}{112, 79, 0}
\title{Generalized Hockey Stick Theorem}
\author{
  Molly Lynch\\
  Department of Mathematics, Statistics, and Computer Science\\
  Hollins University\\
  \texttt{lynchme2@hollins.edu}
  \vspace{1em} 
  \\
  Michael Weselcouch\\
  Department of Mathematics, Computer Science, and Physics\\
  Roanoke College\\
  \texttt{weselcouch@roanoke.edu}
}
\date{\today}
\begin{document}

\begin{abstract}
We give a combinatorial proof via a sign-reversing involution for a new identity that generalizes both the Hockey Stick Identity and the Big Hockey Stick and Pucks Identity.
\end{abstract}

\maketitle
\markleft{\MakeUppercase{\scshape Molly Lynch and Michael Weselcouch}}

\section{Introduction}

In this article we will give a combinatorial proof using a sign-reversing involution of the following identity:

\begin{equation}
\sum_{i=0}^k\binom{(r+1)i+n}{i} = \sum_{\alpha \in \mathcal{C}(k, r)} \sign(\alpha) \Biggl(\prod_{j=1}^{\ell(\alpha)}\binom{r}{\alpha_j-1}\Biggr)\binom{(r+1)k+n+1-|\alpha|+\ell(\alpha)}{k-|\alpha|}.
\tag{\ref{ZC}}
\end{equation}

Before introducing the notation used in the right hand side of the identity, we should note that when $r=0$, Equation (\ref{ZC}) gives the Hockey Stick Identity (also known as Chu's Theorem) \cite{Merris_2003} and when $r=1$, Equation (\ref{ZC}) gives the Big Hockey Stick and Pucks Identity \cite{Hilton_1987, MehriARXIV}.  For fixed values of $r$ and $n$, Equation \ref{ZC} gives many sequences from the On-Line Encyclopedia of Integer Sequences (OEIS \cite{OEIS}) as detailed in Table \ref{OEIS table}.

The Hockey Stick Identity is a well-known result in enumerative combinatorics.  It states that for nonnegative integers $n$ and $k$,
\begin{equation}\label{Hockey Stick Identity}
    \sum_{i=0}^k\binom{i+n}{i} = \binom{k+1+n}{k}.
\end{equation}
The reason behind naming this identity the ``Hockey Stick Identity" is because the terms in the identity appear to form a hockey stick when viewed as being a part of Pascal's triangle (\href{https://oeis.org/A007318}{A007318}).  Figure \ref{fig:HSI} shows the terms in the Hockey Stick Identity when $n=2$ and $k = 3$.  The terms that appear on the left-hand side of the Equation \ref{Hockey Stick Identity} are circled in brown and the term on the right-hand side is circled in gray.  In this case, we see that $\binom{2}{0} + \binom{3}{1} + \binom{4}{2} + \binom{5}{3} = 20 = \binom{6}{3}$.

\begin{figure}
    \centering
    \begin{tikzpicture}
\foreach \i in {0, ..., 3}{
\draw[color=dbrown, ultra thick, fill=brown!35] (\i/2-1,-\i-2) circle [radius=.4];
}
\draw[color=gray, ultra thick, fill = gray!35] (0,-6) circle [radius=.4];
\foreach \n in {0,...,6} {
  \foreach \k in {0,...,\n} {
    \node at (\k-\n/2,-\n) {$\binom{\n}{\k}$};
    }
    }
\end{tikzpicture}

    \caption{The Hockey Stick Identity with $n=2$ and $k = 3$.}
    \label{fig:HSI}
\end{figure}

Similarly, the Big Hockey Stick and Pucks Identity states that for nonnegative integers $n$ and $k$, 
\begin{equation}\label{Big Hockey Stick}
\sum_{i=0}^k\binom{2i+n}{i} = \sum_{i=0}^{\lfloor\frac{k}{2}\rfloor}(-1)^i\binom{2k+1+n-i}{k-2i}.
\end{equation}
Figure \ref{fig:HSPI} shows the terms in the Big Hockey Stick and Pucks Identity when $n=1$ and $k = 4$. As with Figure \ref{fig:HSI}, in Figure \ref{fig:HSPI}, the terms that appear on the left-hand side of the Equation \ref{Big Hockey Stick} are circled in brown and the terms that appear on the right-hand side are circled in gray.  In this case, we see that $\binom{1}{0} + \binom{3}{1} + \binom{5}{2} + \binom{7}{3} + \binom{9}{4} = 175 = \binom{10}{4} - \binom{9}{2} + \binom{8}{0}$.

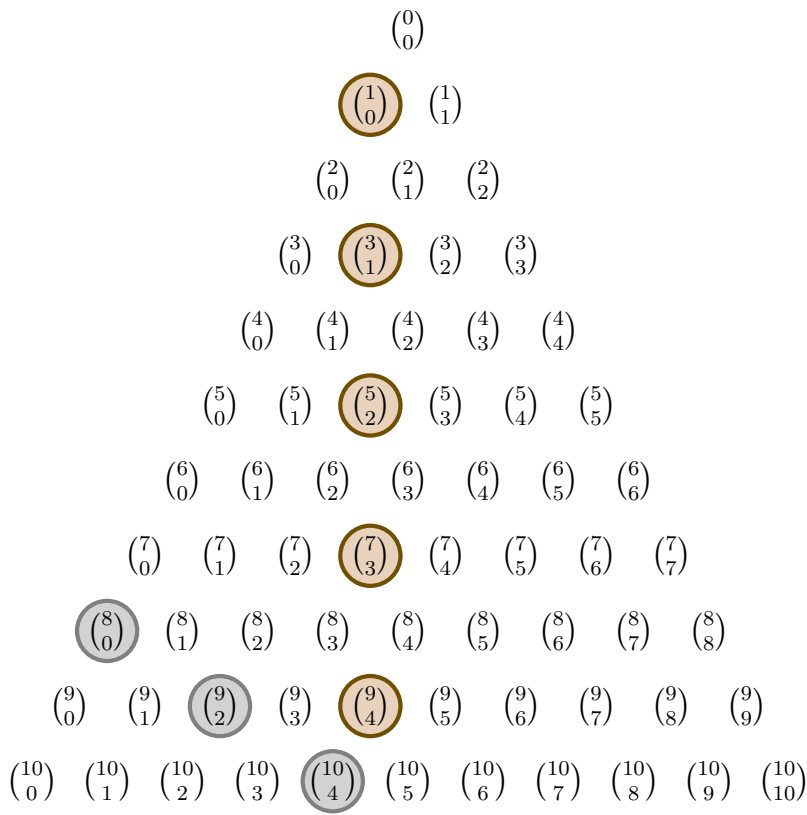
\begin{figure}
    \centering
\begin{tikzpicture}
\foreach \i in {0, ..., 4}{
\draw[color=dbrown, ultra thick, fill=brown!35] (-.5,-2*\i-1) circle [radius=.4];
}
\draw[color=gray, ultra thick, fill = gray!35] (-1,-10) circle [radius=.4];
\draw[color=gray, ultra thick, fill = gray!35] (-2.5,-9) circle [radius=.4];
\draw[color=gray, ultra thick, fill = gray!35] (-4,-8) circle [radius=.4];
\foreach \n in {0,...,10} {
  \foreach \k in {0,...,\n} {
    \node at (\k-\n/2,-\n) {$\binom{\n}{\k}$};
    }
    }
\end{tikzpicture}

    \caption{The Hockey Stick and Pucks Identity with $n=1$ and $k = 4$.}
    \label{fig:HSPI}
\end{figure}

\begin{table}
\begin{center}
\begin{tabular}{ |c|c|c| } 
 \hline
 $r$ & $n$ & OEIS sequence \\ 
 \hline 
 \hline
 0 & 0 & \href{https://oeis.org/A000027}{A000027} \\ 
 \hline
 0 & 1 & \href{https://oeis.org/A000217}{A000217} \\ 
 \hline
 0 & 2 & \href{https://oeis.org/A000292}{A000292} \\
 \hline
 0 & 3 & \href{https://oeis.org/A000332}{A000332} \\
 \hline
 0 & 4 & \href{https://oeis.org/A000389}{A000389}\\
 \hline 
 0 & 5 & \href{https://oeis.org/A000579}{A000579} \\ 
 \hline 
 1 & 0 & \href{https://oeis.org/A006134}{A006134} \\
 \hline 
 1 & 1 & \href{https://oeis.org/A079309}{A079309}\\
 \hline 
 1 & 2 & \href{https://oeis.org/A057552}{A057552}\\
 \hline
 1 & 3 & \href{https://oeis.org/A371965}{A371965} \\
 \hline
 1 & 4 & \href{https://oeis.org/A371964}{A371964} \\
 \hline
 1 & 5 & \href{https://oeis.org/A371963}{A371963} \\
 \hline
 2 & 0 & \href{https://oeis.org/A188675}{A188675}\\
 \hline
 2 & 1 & \href{https://oeis.org/A263134}{A263134} \\
 \hline
 2 & 2 & \href{https://oeis.org/A087413}{A087413}\\
 \hline
 3 & 0 & \href{https://oeis.org/A225612}{A225612} \\
 \hline
 4 & 0 & \href{https://oeis.org/A225615}{A225615} \\
 \hline
\end{tabular}
\caption{OEIS sequences for fixed values of $r$ and $n$}
\label{OEIS table}
\end{center}
\end{table}

We note that previous work generalizing the hockey stick identity can be found in \cite{Jones_1996}, and work more generally regarding Pascal's triangle in \cite{Bondarenko_1993}.

\section{Preliminaries}

We begin with some preliminaries about sign-reversing involutions, compositions, Young diagrams, and lattice paths. For more information, see \cite{Stanley}.

\subsection{Sign-Reversing Involutions}\label{SRI}

Our main tool in this paper is that of a sign-reversing involution. Let $X$ be a set of objects and to each object $x \in X$, assign a weight, $\wt(x)=\pm 1$. The usual objective is to calculate the total weight of the objects in $X$, namely $\sum_{x\in X} \wt(x)$. A \emph{sign-reversing involution} on $X$ is an involution $\tau:X \rightarrow X$ such that if $x \neq \tau(x)$, then $\wt(\tau(x)) = -\wt(x)$. In other words, $\tau$ pairs up elements of $X$ with opposite signs. We denote the fixed points of $\tau$, i.e. $x \in X$ such that $\tau(x)=x$ as Fix$(\tau)$. Therefore, \[\sum_{x\in X} \wt(x) = \sum_{x\in \text{Fix}(\tau)} \wt(x).\] Often it is the case that $\wt(x)=1$ for all $x \in \text{Fix}(\tau)$, so in that case, summing the weights of all elements in $X$ gives the total number of fixed points. 

\subsection{Compositions}\label{comps}

A \textit{composition} $\alpha= (\alpha_1, \alpha_2, \dots, \alpha_j)$ of $k$ is a finite sequence of positive integers summing to $k$.  We write $\alpha \vDash k$ if $\alpha$ is a composition of $k$.  Similarly, we write $|\alpha|$ for the sum of the parts of $\alpha$ and $\ell(\alpha)$ for the number of parts of $\alpha$. We define the \textit{sign of} $\alpha$ to be $\sign(\alpha)=(-1)^{|\alpha|-\ell(\alpha)}$.

Given a composition $\alpha = (\alpha_1, \alpha_2, \dots, \alpha_j)$, we define the compositions $\alpha^{\rightarrow}, \alpha^{\uparrow}$, and $\alpha^{\swarrow}$ by: 
\begin{itemize}[label = -]
\item $\alpha^{\rightarrow} = (\alpha_1+1, \alpha_2, \dots, \alpha_j)$,
\item $\alpha^{\uparrow} = (2, \alpha_1, \alpha_2, \dots, \alpha_j)$,
\item $\alpha^{\swarrow} =\begin{cases} 
       (\alpha_1-1, \alpha_2, \dots, \alpha_j)& \text{if } \alpha_1>2, \\
        (\alpha_2, \dots, \alpha_j)& \text{if } \alpha_1 \leq 2.

   \end{cases}$
\end{itemize}

We note that if $\alpha_1 \geq 2$, these operations on $\alpha$ each result in a composition with sign opposite of the sign of $\alpha$.

\begin{prop}\label{signofalpha}
    If $\alpha_1\geq 2$, then $\sign(\alpha^{\rightarrow}) = \sign(\alpha^{\uparrow}) = \sign(\alpha^{\swarrow}) =  -\sign(\alpha)$.
\end{prop}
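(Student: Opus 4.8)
The plan is to prove all three equalities by a direct computation, tracking how the two statistics $|\alpha|$ and $\ell(\alpha)$ change under each operation and invoking the fact that $\sign(\alpha) = (-1)^{|\alpha| - \ell(\alpha)}$ depends only on the parity of $|\alpha| - \ell(\alpha)$. The guiding principle is that flipping the sign is equivalent to changing the quantity $|\alpha| - \ell(\alpha)$ by an odd integer, so for each of the three operations I would simply verify that the net change in $|\alpha| - \ell(\alpha)$ is $\pm 1$.

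First I would handle $\alpha^{\rightarrow}$. Here we increase the first part by $1$ and leave the number of parts unchanged, so $|\alpha^{\rightarrow}| = |\alpha| + 1$ while $\ell(\alpha^{\rightarrow}) = \ell(\alpha)$; hence $|\alpha^{\rightarrow}| - \ell(\alpha^{\rightarrow}) = (|\alpha| - \ell(\alpha)) + 1$ and the sign flips. Next, for $\alpha^{\uparrow}$ we prepend a part equal to $2$, which increases the sum by $2$ and the number of parts by $1$, giving $|\alpha^{\uparrow}| - \ell(\alpha^{\uparrow}) = (|\alpha| + 2) - (\ell(\alpha) + 1) = (|\alpha| - \ell(\alpha)) + 1$, again an odd change, so the sign flips.

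The only step requiring a small case analysis is $\alpha^{\swarrow}$, since its definition branches on whether $\alpha_1 > 2$ or $\alpha_1 \leq 2$. When $\alpha_1 > 2$ we subtract $1$ from the first part without changing the number of parts, so $|\alpha^{\swarrow}| - \ell(\alpha^{\swarrow}) = (|\alpha| - \ell(\alpha)) - 1$. When $\alpha_1 \leq 2$, the hypothesis $\alpha_1 \geq 2$ forces $\alpha_1 = 2$, so we delete a part equal to $2$; this decreases the sum by $2$ and the number of parts by $1$, yielding $|\alpha^{\swarrow}| - \ell(\alpha^{\swarrow}) = (|\alpha| - 2) - (\ell(\alpha) - 1) = (|\alpha| - \ell(\alpha)) - 1$. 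In both branches the change is odd, so $\sign(\alpha^{\swarrow}) = -\sign(\alpha)$.

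There is no genuine obstacle here; the result is a bookkeeping verification, and the only point demanding attention is confirming that the two branches of $\alpha^{\swarrow}$ produce the \emph{same} parity change, which is exactly why the hypothesis $\alpha_1 \geq 2$ is imposed. (If $\alpha_1 = 1$ were allowed, deleting a part of size $1$ would change $|\alpha| - \ell(\alpha)$ by $0$ and preserve the sign, so this hypothesis is what makes all three operations sign-reversing simultaneously.)
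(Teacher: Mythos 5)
Your proof is correct and follows essentially the same route as the paper's: a direct parity computation of how $|\alpha| - \ell(\alpha)$ changes under each of the three operations, with the same two-branch case analysis for $\alpha^{\swarrow}$. Your closing remark explaining why the hypothesis $\alpha_1 \geq 2$ is needed (a deleted part of size $1$ would preserve the sign) is a nice addition the paper leaves implicit.
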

\begin{proof}
We prove each claim one at a time. 
\begin{itemize}[label = -]
    \item $\sign(\alpha^{\rightarrow}) = -\sign(\alpha)$: In this case, we have $\ell(\alpha^{\rightarrow}) = \ell(\alpha)$ but $|\alpha^{\rightarrow}| = |\alpha|+1$. As a result, $|\alpha^{\rightarrow}|-\ell(\alpha^{\rightarrow}) = |\alpha| - \ell(\alpha) + 1$. It follows that $\sign(\alpha^{\rightarrow})=-\sign(\alpha).$
    \item $\sign(\alpha^{\uparrow}) = -\sign(\alpha)$: Here we have $\ell(\alpha^{\uparrow}) = \ell(\alpha)+1$ and $|\alpha^{\uparrow}|=|\alpha| + 2$. As a result, $|\alpha^{\uparrow}| - \ell(\alpha^{\uparrow}) = |\alpha| - \ell(\alpha) + 1$. This again implies that $\sign(\alpha^{\uparrow})=-\sign(\alpha).$
    \item $\sign(\alpha^{\swarrow})=-\sign(\alpha):$ If $\alpha_1 > 2,$ then $\ell(\alpha^{\swarrow}) = \ell(\alpha)$ and $|\alpha^{\swarrow}|=|\alpha|-1$. From this, we have $|\alpha^{\swarrow}|-\ell(\alpha) = |\alpha|-\ell(\alpha)-1$ resulting in $\sign(\alpha^{\swarrow})=-\sign(\alpha)$. Finally, if $\alpha_1 =2$, then $\ell(\alpha^{\swarrow}) = \ell(\alpha)-1$ and $|\alpha^{\swarrow}|=|\alpha|-2.$ As a result, $\sign(\alpha^{\swarrow})=-\sign(\alpha).$
    
\end{itemize}
\end{proof}

Let $c(k, r)$ denote the set of compositions  $\alpha \vDash k$ such that $2 \leq \alpha_i \leq r+1$ for all $1 \leq i \leq \ell(\alpha)$ and let $\mathcal{C}(k, r) =\displaystyle \bigcup_{j=0}^k c(j, r)$.  

\begin{example}
If $k = 5$ and $r = 3$, then 
\[\mathcal{C}(5, 3) = \{(), (2), (3), (4), (2, 2), (2, 3), (3, 2) \}.\]
\end{example}

\subsection{Young Diagrams}\label{YD}
For a composition $\alpha$, the Young diagram of shape $\alpha$ is a collection of boxes arranged in left-justified rows such that row $i$ has $\alpha_i$ boxes. The compositions $\alpha^{\rightarrow}, \alpha^{\uparrow}$, and $\alpha^{\swarrow}$ defined in Section \ref{comps} can be described in term of Young diagrams. The Young diagram for $\alpha^{\rightarrow}$ can be obtained from the Young diagram for $\alpha$ by adding an additional box in the first row. The 
Young diagram for $\alpha^{\uparrow}$ can be obtained from the Young diagram for $\alpha$ by creating a new first row with two boxes.  Finally, $\alpha^{\swarrow}$ is the Young diagram obtained from $\alpha$ by removing the first row in the case where $\alpha_i\leq 2,$ and otherwise removing a box from the first row.

We define a \emph{labeled tableau of shape $\alpha$} to be a labeling of the boxes of
the Young diagram of shape $\alpha$ such that the first entry in each row is empty and the remaining entries are nonnegative integers that are strictly increasing from left to right.  We will denote by $\Lambda(\alpha, r)$ the set of all labeled tableau of shape $\alpha$ whose labels are all less than $r$.  For $i, j \geq 1$, we will let $\lambda_i$ denote the set of entries of the $i$th row of $\lambda$, $|\lambda_i|$ denote the number of boxes in the $i$th row, and we will let $\lambda_{i, j}$ denote the entry in the $i$th row and $(j+1)$st column of $\lambda$.  Finally, if $\lambda \in \Lambda(\alpha, r)$, then we define the \emph{sign of $\lambda$} as $\sign(\lambda) = \sign(\alpha)$.

\begin{example}\label{Labeled Tableau}
The following is an element of $\Lambda((4, 5, 3), 4)$:
\begin{center}
$\begin{ytableau}
       {} & 0 & 2 & 3 \\
       {} & 0 & 1 & 2 & 3 \\
       {} & 1 & 3 \\
\end{ytableau}$
\end{center}
In this case, $\lambda_{1, 1} = 0$ since $0$ is in the first row and second column of $\lambda$. We also have that $\sign(\lambda)=-1$.
\end{example}

\begin{prop}\label{numberLT}
Let $r$ be a nonnegative integer. Then if $r=0$, the size of the set $\Lambda(\alpha, r)$ is 1.  If $r \geq 1$, then
\[|\Lambda(\alpha, r)| = \prod_{j=1}^{\ell(\alpha)}\binom{r}{\alpha_j-1}.\]
\end{prop}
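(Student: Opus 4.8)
The plan is to count the labeled tableaux directly, row by row, exploiting the fact that the fillings of distinct rows are completely independent. First I would observe that a labeled tableau $\lambda \in \Lambda(\alpha, r)$ is specified precisely by filling each of the $\ell(\alpha)$ rows of the Young diagram of shape $\alpha$, and that all of the defining constraints (first box empty, remaining entries strictly increasing, all entries less than $r$) refer only to entries lying within a single row. Consequently there is no interaction between the choices made in different rows, and by the multiplication principle $|\Lambda(\alpha, r)|$ equals the product, over $i$, of the number of valid fillings of row $i$.

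Next I would count the fillings of a single row. Row $i$ has $\alpha_i$ boxes; the first is empty, and the remaining $\alpha_i - 1$ boxes must carry a strictly increasing sequence of nonnegative integers each less than $r$, i.e.\ a strictly increasing sequence drawn from the $r$-element set $\{0, 1, \dots, r-1\}$. The key observation is that a strictly increasing sequence is uniquely recovered from the (unordered) set of its entries, so valid fillings of row $i$ are in bijection with $(\alpha_i - 1)$-element subsets of an $r$-element set. There are exactly $\binom{r}{\alpha_i - 1}$ such subsets, hence $\binom{r}{\alpha_i - 1}$ fillings of row $i$. Combining this with the independence observed above yields $|\Lambda(\alpha, r)| = \prod_{i=1}^{\ell(\alpha)} \binom{r}{\alpha_i - 1}$ when $r \geq 1$.

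Finally I would dispatch the degenerate case $r = 0$. In the setting where this proposition is applied, $\alpha$ ranges over $\mathcal{C}(k, 0) = \{()\}$, so $\alpha$ is the empty composition, whose Young diagram has no boxes; the unique empty labeling is then the sole element of $\Lambda((), 0)$, giving a count of $1$ (consistent with the empty product). I do not expect a genuine obstacle here, since the argument is a direct enumeration; the only points requiring care are justifying cleanly that the strict-increase condition reduces the filling of a row to an unordered subset choice, and isolating the $r = 0$ boundary so that the empty-tableau and empty-product conventions are reconciled.
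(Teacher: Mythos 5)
Your proposal is correct and follows essentially the same argument as the paper: each row's labels form an $(\alpha_j-1)$-element subset of $\{0,\dots,r-1\}$ since the strictly increasing order is determined by the set, the rows are independent so the counts multiply, and the $r=0$ case is handled separately via the empty composition. No gaps to report.
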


\begin{proof}
If $r = 0$, then the only element of $\Lambda(\alpha, r)$ is the  labeled tableau with empty shape.

Now assume $r\geq 1$.  The constraints on the labels of the labeled tableaux in $\Lambda(\alpha, r)$ are that the first entry of each row must be empty, the remaining entries must be nonnegative integers listed in strictly increasing order from left to right, and the labels must be less than $r$.  That means that for all $1\leq j \leq \ell(\alpha)$, the labels used in row $j$ must be an $(\alpha_j-1)$-element subset of $\{0, \dots, r-1\}$.  There are $\binom{r}{\alpha_j-1}$ such subsets.  Since the labels in each row are independent from the labels in the other rows, taking the product over the number of rows gives the desired result.  
\end{proof}

Given $\lambda \in \Lambda(\alpha, r)$ and $a \in \{0, \dots, r-1\}$, we define $\lambda \oplus a$ to be the labeled tableau of shape $\alpha^\uparrow$ satisfying $(\lambda \oplus a)_{1, 1} = a$ and for $i \geq 1$, $(\lambda \oplus a)_{i+1} = \lambda_{i}$.  If $a \notin \lambda_1$, then we let $\lambda + a$ to be the labeled tableau of shape $\alpha^\rightarrow$ that satisfies  $(\lambda+a)_1 = \lambda_1 \cup \{a\}$ and for $i > 1,$ $(\lambda+a)_i = \lambda_i$.  Finally we let $\lambda^{\swarrow}$ be the labeled tableau of shape $\alpha^{\swarrow}$ where if $|\lambda_1| > 2$, then $(\lambda^{\swarrow})_1 = \lambda_1 - \lambda_{1, 1}$ and for $i > 1,$ $(\lambda^{\swarrow})_i = \lambda_i$, but if $|\lambda_1| = 2$, then for $i \geq 1$,  $(\lambda^{\swarrow})_i = \lambda_{i+1}$.
That is to say, $\lambda^{\swarrow}$ is created from $\lambda$ by removing the smallest entry in the first row of $\lambda$, if $\alpha_1 > 2$ or it is created by removing the entire first row of $\lambda$ otherwise.


\begin{example}
If $\lambda$ is the labeled tableau shown in Example \ref{Labeled Tableau}, then $\lambda\oplus1$, $\lambda + 1$, and $\lambda^{\swarrow}$ are the following labeled tableaux:
\begin{center}
$\lambda \oplus 1 = 
\begin{ytableau}
        {} & 1 \\
       {} & 0 & 2 & 3 \\
       {} & 0 & 1 & 2 & 3 \\
       {} & 1 & 3 \\
\end{ytableau}$
\hspace{1 cm}
$\lambda + 1 = 
\begin{ytableau}
       {} & 0 & 1 & 2 & 3 \\
       {} & 0 & 1 & 2 & 3 \\
       {} & 1 & 3 \\
\end{ytableau}$
\hspace{1 cm}
$\lambda^{\swarrow} = 
\begin{ytableau}
       {} & 2 & 3 \\
       {} & 0 & 1 & 2 & 3 \\
       {} & 1 & 3 \\
\end{ytableau}$
\end{center}
\end{example}


\subsection{Lattice Sequences and Lattice Paths}

Let $m$ be a nonnegative integer.  A \textit{lattice sequence $L = (L_0, L_1, L_2, \dots, L_{m}, L_{m+1})$} is a sequence of nonnegative integers satisfying $0=L_0 \leq L_1 \leq \dots \leq L_{m+1}$.  We say that $m$ is the \emph{width} of $L$ and that $L_{m+1}$ is the \emph{height} of $L$.  Note that if the width of $L$ is $m$, then $L$ has $(m+2)$ entries.

For $0 \leq i \leq j \leq m+1$, let $L_{[i, j]}$ denote the subsequence  $(L_i, L_{i+1}, \dots, L_j)$ and let $L_{[i, \dots]}$ be the subsequence $(L_i, L_{i+1}, \dots, L_{m+1})$. If $L$ and $L'$ are both sequences, let $L;L'$ denote the concatenation of the two sequences. Finally, for any sequence $L = (L_0, L_1, \dots)$, let $L + a$ denote the sequence $(L_0+a, L_1+a, \dots)$. 

A lattice sequence can be interpreted as a lattice path consisting of steps of the form $(1, 0)$ or $(0, 1)$ from $(0, 0)$ to $(m, L_{m+1})$.  For $1 \leq i \leq m$, the value of $L_i$ gives the height of the $ith$ horizontal step of the lattice path.  
Since lattice sequences can be interpreted as a lattice paths, the number of lattice sequences with width $m$ and height $n$ is $\binom{m+n}{m}$.  We will let $\mathcal{L}(m, n)$ denote the set of such lattice sequences. 

\begin{defn}
 Let $L \in \mathcal{L}(m, n)$.  The \emph{ascent set of $L$}, denoted $\asc(L)$, is given by
\[\asc(L) = \{i \in \{0, \dots, m\} \mid L_i < L_{i+1}\}.\]   
\end{defn}

When viewing $L$ as a lattice path, the ascent set of $L$ gives the horizontal coordinates in which vertical steps appear.

\begin{example}
    

The following is the lattice path corresponding to the lattice sequence $L = (0, 0, 0, 2, 3, 5) \in \mathcal{L}(4,5)$. The ascent set of $L$ is $\asc(L) = \{2, 3, 4\}$.
The entries in the sequence (excluding the first and last) correspond to the heights of the horizontal steps.

\begin{center}
\begin{tikzpicture}[scale = .5]
\draw (0,0) grid (4, 5);

\draw[ultra thick] (0,0) -- (2,0) -- (2, 2) -- (3, 2) -- (3, 3) -- (4, 3) -- (4, 5);

\end{tikzpicture}
\end{center}

\end{example}






We will now introduce some notation that will be heavily used in the proof of Theorem \ref{Big Boy}.  Let $L \in \mathcal{L}(m, n)$ and let $\eta \leq m$ be a nonnegative integer.  Let $\omega_\eta(L)$, $\gamma_{\eta}(L)$, and $\mu_{\eta}(L)$ be the following values:

\begin{itemize}[label = -]
\item $\omega_\eta(L) = L_{\eta+1}- L_{\eta},$

\item $\gamma_\eta(L) = \begin{cases}
\max(\asc(L_{[0, \eta]}) ), & \text{if } \asc(L_{[0, \eta]}) \neq \varnothing; \\
-1 & \text{ otherwise.}
\end{cases}, $
\item  $\mu_\eta(L) = \eta - 1 - \gamma_\eta(L).$
\end{itemize}

When viewing $L$ as a lattice path from $(0, 0)$ to $(m, n)$, $\omega_\eta(L)$ gives the number of vertical steps on the line $x = \eta$ and if $\gamma_\eta(L) \geq 0$, then $\gamma_\eta(L)$ gives the horizontal location of the east-most vertical step west of the line $x=\eta$.  If $\gamma_\eta(L) = -1$, then $L_{[0, \eta]} = (0, \dots, 0)$.  That is to say there are no vertical steps west of $x=\eta$.  The value of $\mu_\eta(L)$ gives the horizontal distance between the vertical step used to determine $\gamma_{\eta}(L)$ and line $x = \eta -1$.


\begin{example}
Consider the lattice sequence $L = (0, 0, 0, 0, 1, 2, 2, 2, 2, 2, 4, 4, 4, 4)$ where $\eta = 9$.  It follows that $\omega_{9}(L) = L_{10}-L_{9} = 4-2 = 2$, $\gamma_{9}(L) = 4$ and $\mu_9(L) = 9-1-4 = 4$.  The dashed blue line is the line $x = \eta$ and the dashed red line marks the location of the east-most vertical step west of $x=\eta$.

\begin{center}
\begin{tikzpicture}[scale = .5]
\draw (1,0) grid (13,4);

\draw[ultra thick] (1,0) -- (4, 0) -- (4, 1) -- (5,1) -- (5, 2) -- (10, 2) -- (10, 3) -- (10, 4) -- (13, 4);
\draw[ultra thick, dashed, color=lblue] (10,0) -- (10, 4);
\draw[ultra thick, dashed, color=red] (5,0) -- (5, 4);

\end{tikzpicture}
\end{center}

\end{example}





The following lemma will be used in the proof Theorem \ref{Big Boy}.

\begin{lemma}\label{Fixed Points}
Let $r$, $k$, $n$, and  $i$ be fixed nonnegative integers, and let $\eta = rk+n+1$. If
$A = \{L \in \mathcal{L}(\eta, k) \mid \mu_\eta(L) \geq r \cdot \omega_\eta(L) \text{ and } \omega_\eta(L) = k-i\}$, then \[|A| = \binom{(r+1)i+n}{i}.\]
\end{lemma}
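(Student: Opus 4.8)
The plan is to understand the set $A$ concretely as a set of lattice paths and then establish a bijection with objects counted by $\binom{(r+1)i+n}{i}$, which by the lattice-path interpretation from Section~\ref{YD} is the number of lattice sequences in $\mathcal{L}(i, (r+1)i+n - \text{(something)})$ — more precisely, $\binom{(r+1)i+n}{i}$ counts lattice sequences of width $i$ and height $(r+1)i+n$, i.e. elements of $\mathcal{L}(i, (r+1)i+n)$. So the goal is to exhibit a bijection $A \to \mathcal{L}(i, (r+1)i+n)$.

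First I would unpack the two constraints defining $A$ for a path $L \in \mathcal{L}(\eta, k)$ with $\eta = rk+n+1$. The condition $\omega_\eta(L) = k - i$ says that exactly $k-i$ of the total $k$ vertical steps occur on the final vertical line $x = \eta$; equivalently, exactly $i$ vertical steps occur strictly to the west of $x=\eta$, i.e. within $L_{[0,\eta]}$. The condition $\mu_\eta(L) \geq r \cdot \omega_\eta(L) = r(k-i)$ controls how far to the left the east-most western vertical step must sit: since $\mu_\eta(L) = \eta - 1 - \gamma_\eta(L)$, this says $\gamma_\eta(L) \leq \eta - 1 - r(k-i) = rk + n - r(k-i) = ri + n$. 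In other words, \emph{all} vertical steps occurring west of $x=\eta$ must have horizontal coordinate at most $ri + n$; there are no vertical steps in the horizontal range $(ri+n, \eta)$ other than on the line $x=\eta$ itself.

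Next I would translate this into a count. A path in $A$ is determined by its initial portion $L_{[0,\, ri+n+1]}$, which lives in the rectangle of width $ri + n + 1$ and must contain exactly $i$ vertical steps (with heights summing appropriately); the remaining columns from $ri+n+1$ up to $\eta$ are forced to be flat until the final line $x=\eta$, which carries the remaining $k-i$ vertical steps. Thus an element of $A$ corresponds exactly to a choice of a lattice path using $i$ vertical steps and $ri + n$ horizontal steps within that leftward block — equivalently a lattice sequence of width $ri+n$ and height $i$, or by the symmetric reading, a sequence with $i$ vertical and $ri+n$ horizontal unit steps. The number of such paths is $\binom{(ri+n) + i}{i} = \binom{(r+1)i + n}{i}$, as desired.

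I expect the main obstacle to be bookkeeping the off-by-one shifts in the indices ($\eta = rk+n+1$ versus the cutoff at $ri+n$, and whether the vertical steps on $x=\eta$ are counted within $\omega_\eta$ or excluded), and making airtight the claim that the ``forced flat'' region between $x = ri+n$ and $x=\eta$ genuinely contributes no additional freedom — that is, verifying that once the western steps are confined to $x \leq ri+n$ and the total vertical count is pinned, the middle columns and the final line are completely determined by the two constraints. Concretely, the delicate point is confirming that $\gamma_\eta(L) \leq ri+n$ together with $\omega_\eta(L) = k-i$ leaves exactly the initial $(ri+n)$-by-$i$ block free and nothing else, so that the counting reduces cleanly to a single binomial coefficient rather than a sum. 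Once the interpretation is set up correctly, the final enumeration is immediate from the standard lattice-path count recalled in Section~\ref{YD}.
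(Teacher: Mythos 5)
Your proposal is correct and takes essentially the same approach as the paper: the paper proves the lemma via exactly this truncation/extension bijection, with $\phi(L)=L_{[0,\,ri+n+1]}$ mapping $A$ to $\mathcal{L}(ri+n,\,i)$ and the inverse $\psi$ appending the forced flat tail ending in the jump to height $k$. The only blemish is your opening paragraph's identification of the target set as $\mathcal{L}(i,\,(r+1)i+n)$, which would have $\binom{(r+2)i+n}{i}$ elements; the correct target --- the one your final count actually uses --- is $\mathcal{L}(ri+n,\,i)$, of width $ri+n$ and height $i$.
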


\begin{proof}
We will prove this results by showing that there is a bijection between the sets $A$ and $\mathcal{L}(ri+n, i)$ as the size of $\mathcal{L}(ri+n, i)$ is $\binom{(r+1)i+n}{i}$.

Let $B = (B_1, \dots, B_{r(k-i)+1})$ be the sequence defined by $B_j = i$ if $j\leq r(k-i)$ and $B_{r(k-i)+1} = k$.  Let $\psi$ be a function with domain $\mathcal{L}(ri+n, i)$ defined by $\psi(L) = L;B$.  We will now show that $\psi(L) \in A$.

Let $L \in \mathcal{L}(ri+n, i)$.  Since $L_{ri+n+1} = i$ for all $L \in \mathcal{L}(ri+n, i)$, it follows that $L;B$ is weakly-increasing and hence is a lattice sequence with height $k$.  The number of elements in the lattice sequence $L;B$ is $(ri+n + 2) + (r(k-i)+1) = rk+n+3$. Since the width of a lattice sequence is the number of entries minus 2, the width of $L;B$ is $rk+n+1 = \eta$.  Note that $(L;B)_{\eta+1} - (L;B)_{\eta} = k - i$ and $\max(\asc(L;B)) = \max(\asc(L))\leq ri+n$.  This means that $\mu_\eta(L;B) \geq \eta -1 - (ri+n) = r(k-i) = r\omega_\eta(L;B)$.  Therefore we have that $\psi(L)$ is an element of $A$.

Now let $\phi$ be the function with domain $A$ defined by $\phi(L) = L_{[0, ri+n+1]}$.  We will now show that $\phi(L) \in \mathcal{L}(ri+n, i)$.  

Let $L \in A$.  This means that $\omega_\eta(L) = k-i$ and $\mu_\eta(L) \geq r \cdot (k-i)$.  Since $\omega_\eta(L) = k-i$, it follows that $L_\eta = k - (k-i) = i$ and because $\mu_\eta(L) \geq r \cdot (k-i)$, it follows that $L_j = i$ for all $j \in \{\eta-1-r\cdot(k-i)+1,\dots,\eta\}$.  In particular, $L_{ri+n+1} = i$.  Therefore $L_{[0, ri+n+1]} \in \mathcal{L}(ri+n, i)$ as desired.

Since $\psi$ and $\phi$ are inverses of each other (the sequence of terms removed by $\phi$ is the sequence $B$) we have that these sets are in bijection.
\end{proof}

\subsection{Labeled Lattice Sequences}

We will now turn our attention to the main combinatorial object of this article, the labeled lattice sequence.


\begin{defn}
    A \emph{labeled lattice sequence of type $(k, n, r)$} is a pair, $(\lambda, L)$, where $\lambda \in \Lambda(\alpha, r)$ for some $\alpha \in \mathcal{C}(k, r)$ and $L \in \mathcal{L}(rk+n+1+\ell(\alpha), k-|\alpha|)$.
\end{defn}

Notice that if $(\lambda, L)$ is a labeled lattice sequence, then the labels of $\lambda$ are independent of the constraints that $L$ satisfies.

\begin{prop}\label{Labeled Sequences}
Let $k$, $n$, and $r$ be nonnegative integers. 
Then the number of labeled lattice sequences, $(\lambda, L),$ of type $(k, n, r)$ such that $\sh(\lambda) = \alpha$ for a fixed $\alpha \in \mathcal{C}(k,r)$ is 
\[\Biggl(\prod_{j=1}^{\ell(\alpha)}\binom{r}{\alpha_j-1}\Biggr)\binom{(r+1)k+n+1-|\alpha|+\ell(\alpha)}{k-|\alpha|}.\]
\end{prop}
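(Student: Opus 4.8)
The plan is to count the labeled lattice sequences $(\lambda, L)$ with fixed shape $\sh(\lambda) = \alpha$ by exploiting the fact, highlighted in the remark following the definition, that the choice of $\lambda$ and the choice of $L$ are completely independent once $\alpha$ is fixed. Because the number of labeled lattice sequences with a given shape is simply the product of the number of choices for each coordinate, it suffices to count each factor separately and multiply.

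First I would count the number of admissible $\lambda$. Since $\sh(\lambda) = \alpha$ is fixed and $\lambda$ ranges over $\Lambda(\alpha, r)$, Proposition \ref{numberLT} gives this count directly as
\[
|\Lambda(\alpha, r)| = \prod_{j=1}^{\ell(\alpha)}\binom{r}{\alpha_j-1},
\]
which is precisely the first factor in the claimed formula. (One should note that this formula is stated in Proposition \ref{numberLT} for $r \geq 1$, while for $r = 0$ the set $\mathcal{C}(k,0)$ forces $\alpha$ to be the empty composition and $|\Lambda(\alpha,0)| = 1$; in that degenerate case the empty product equals $1$ as well, so the formula is consistent.)

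Next I would count the number of admissible $L$. By the definition of a labeled lattice sequence of type $(k,n,r)$, the second coordinate $L$ ranges over $\mathcal{L}(rk+n+1+\ell(\alpha),\, k-|\alpha|)$. As established in the preliminaries, the number of lattice sequences of width $m$ and height $h$ is $\binom{m+h}{m}$. Here the width is $m = rk + n + 1 + \ell(\alpha)$ and the height is $h = k - |\alpha|$, so $m + h = (r+1)k + n + 1 - |\alpha| + \ell(\alpha)$. Hence the number of choices for $L$ is
\[
\binom{(r+1)k+n+1-|\alpha|+\ell(\alpha)}{k-|\alpha|},
\]
which is exactly the second factor.

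Finally, since $\lambda$ and $L$ are chosen independently, the total count is the product of these two factors, yielding the claimed formula. I expect this proof to be almost entirely bookkeeping: the only real care needed is the arithmetic simplification of $m+h$ in the binomial coefficient and the sanity check that the $r=0$ edge case of Proposition \ref{numberLT} does not break the product form. There is no genuine combinatorial obstacle here, as all the substantive work has already been done in Propositions \ref{numberLT} and the lattice-sequence counting discussion; this proposition is essentially assembling those two ingredients via the product rule.
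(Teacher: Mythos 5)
Your proposal is correct and follows essentially the same route as the paper's own proof: both invoke Proposition \ref{numberLT} for the count of labeled tableaux, count the lattice sequences of width $rk+n+1+\ell(\alpha)$ and height $k-|\alpha|$ by the binomial formula, and multiply using independence. Your handling of the $r=0$ case via the empty product even mirrors the paper's separate treatment of that case, so there is nothing to add.
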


\begin{proof}
    If $r = 0$, then the empty composition is the only element of $\mathcal{C}(k, 0)$.  There are $\binom{k+n+1}{k}$ elements of $\mathcal{L}(n+1, k)$.

    Now assume $r \geq 1$.  By Proposition \ref{numberLT}, the number of labeled tableaux of shape $\alpha$ whose labels are all less than $r$ is $\Bigl(\prod_{j=1}^{\ell(\alpha)}\binom{r}{\alpha_j-1}\Bigr)$.  The number of $(rk+n+1+\ell(\alpha))$-width lattice sequences with height $(k-|\alpha|)$ is $\binom{(r+1)k+n+1+-|\alpha| + \ell(\alpha)}{k-|\alpha|}$.  Therefore the number of pairs is the product of these two numbers, as desired.
\end{proof}

\section{Generalized Hockey Stick Theorem}

We now have the necessary results and notation to state the main result of this article.

\begin{thm}\label{Big Boy}
Let  $k$, $n$, and $r$ be nonnegative integers and let $X$ be the set of labeled lattice sequences of type $(k,n,r)$. 
For each $(\lambda,L) \in X$ assign the weight $\wt((\lambda,L)) = sign(\lambda)$. Then, 
\begin{equation}\label{SRI}
    \sum_{(\lambda,L)\in X} \wt((\lambda,L)) = \sum_{i=0}^{k} \binom{(r+1)i+n}{i}.
\end{equation}
\end{thm}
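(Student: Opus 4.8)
The plan is to construct a sign-reversing involution $\tau$ on the set $X$ of labeled lattice sequences of type $(k,n,r)$, so that by the general principle recalled in Section \ref{SRI} the left-hand side of Equation \eqref{SRI} collapses to a sum over the fixed points of $\tau$. The target is to show that every fixed point has weight $+1$ and that the fixed points are enumerated so that their total count is $\sum_{i=0}^k \binom{(r+1)i+n}{i}$. Since $\sign(\lambda) = \sign(\alpha) = (-1)^{|\alpha|-\ell(\alpha)}$, any $\lambda$ whose shape $\alpha$ is the empty composition has weight $+1$; so a natural guess is that the involution should pair off all pairs $(\lambda, L)$ with $\ell(\alpha) \geq 1$ against each other, and that the fixed points are exactly those with $\lambda = \varnothing$ together with possibly some special configurations. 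By Proposition \ref{Labeled Sequences} the total weighted count for a fixed shape $\alpha$ equals $\sign(\alpha)$ times the expression appearing in the right-hand side of Identity \eqref{ZC}, so proving Theorem \ref{Big Boy} simultaneously establishes the headline identity.

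The heart of the construction will use the three operations on compositions, $\alpha^{\rightarrow}$, $\alpha^{\uparrow}$, $\alpha^{\swarrow}$, together with their tableau-level analogues $\lambda + a$, $\lambda \oplus a$, and $\lambda^{\swarrow}$, all of which flip $\sign(\alpha)$ by Proposition \ref{signofalpha}. The idea is to read the lattice sequence $L$ near the special column $x = \eta$, where $\eta = rk+n+1+\ell(\alpha)$ is the width of $L$, using the statistics $\omega_\eta(L)$, $\gamma_\eta(L)$, and $\mu_\eta(L)$ introduced just before Lemma \ref{Fixed Points}. The value $\omega_\eta(L)$ counts vertical steps on the last column, and $\mu_\eta(L)$ measures how far back the previous vertical step sits; the inequality $\mu_\eta(L) \geq r\cdot \omega_\eta(L)$ is precisely the condition isolating the "good" sequences counted by Lemma \ref{Fixed Points}. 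I would define $\tau$ by cases: when the data near the top of $\lambda$ and the end of $L$ allow a box to be \emph{removed} from $\lambda$ (via $\lambda^{\swarrow}$, decreasing $\ell(\alpha)$ or a part) while correspondingly \emph{lengthening} $L$, do so; otherwise run the reverse move, \emph{adding} to $\lambda$ (via $\lambda + a$ or $\lambda \oplus a$) while \emph{shortening} $L$, with the label $a$ recovered from the local geometry of $L$ at the column determined by $\gamma_\eta$. The constraints $2 \le \alpha_i \le r+1$ and the requirement that row entries be a strictly increasing subset of $\{0,\dots,r-1\}$ are exactly what make these two families of moves mutually inverse and exhaustive, so that $\tau$ is a genuine involution reversing $\sign(\lambda)$ off its fixed set.

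The fixed points should then be exactly the pairs $(\lambda, L)$ for which neither a removal nor an addition move is available, and I expect these to coincide with the configurations where $\lambda = \varnothing$ (so $\sign(\lambda) = +1$) and $L$ satisfies the extremal condition $\mu_\eta(L) \geq r\cdot\omega_\eta(L)$; grouping by $i = k - \omega_\eta(L)$ and invoking Lemma \ref{Fixed Points} would give $\sum_{i=0}^k \binom{(r+1)i+n}{i}$ fixed points, each of weight $+1$, finishing the proof. The main obstacle will be verifying that $\tau$ is well-defined and an involution across the boundary cases, namely the interface between the two operation families: I must check that the "remove-and-lengthen" map and the "add-and-shorten" map really invert one another, that the label $a$ stripped off in one direction is the unique label forced back in the other, and that the threshold distinguishing a fixed point from a movable object is clean with no overlap or omission. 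The delicate bookkeeping lies in tracking how the width parameter $\eta = rk+n+1+\ell(\alpha)$ shifts when $\ell(\alpha)$ changes, and in confirming that the local lattice-path surgery respects the bound $\alpha_i \le r+1$ (which corresponds to rows having at most $r-1$ labeled boxes, matching the label alphabet $\{0,\dots,r-1\}$); getting those edge conditions exactly right is where the real work will be.
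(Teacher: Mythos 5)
Your outline follows the same strategy as the paper's proof: a sign-reversing involution built from the operations $\lambda \oplus a$, $\lambda + a$, $\lambda^{\swarrow}$ (whose sign-flipping is Proposition \ref{signofalpha}), with fixed points being exactly the pairs with $\sh(\lambda) = ()$ and $\mu_\eta(L) \geq r\cdot\omega_\eta(L)$, enumerated via Lemma \ref{Fixed Points}. But what you have written is a plan, not a proof: the involution $\tau$ is never actually defined, and its definition is the entire content of the theorem. You defer precisely the parts that constitute the argument --- which move applies to which objects, what surgery is performed on $L$, and why the two directions invert one another. Concretely, the paper needs a fourth statistic you never introduce, $\nu = \eta - 1 - r\cdot\omega - \lambda_{1,1}$, which (i) tells you \emph{where} in $L$ to insert or delete an ascent when a label is removed from or added to $\lambda$, and (ii) supplies the dichotomy $\gamma \leq \nu$ versus $\gamma > \nu$ that decides between the move $\lambda^{\swarrow}$ (shrink the first row / delete it) and the move $\lambda + (\mu \bmod r)$ (lengthen the first row). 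Without something playing the role of $\nu$, ``when a box can be removed, remove it; otherwise add'' is not a well-defined map, and there is no way to check involutivity; the five-case verification that $\tau(\tau(\lambda,L)) = (\lambda,L)$, where each case lands in a different case of the definition, is exactly where the theorem lives.

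A second, more pointed error: you set $\eta = rk+n+1+\ell(\alpha)$, the width of $L$, whereas the paper fixes $\eta = rk+n+1$ independently of the object. This is not cosmetic. Because $\ell(\alpha)$ changes under every non-trivial move, your anchor column moves with the object, and you would have to compare statistics measured at different columns of sequences of different widths --- the ``delicate bookkeeping'' you flag as the main obstacle. The paper's fixed anchor is the device that makes that bookkeeping disappear: all statistics $\omega, \gamma, \mu, \nu$ are read at the same column $x = rk+n+1$ for every object, and the extra $\ell(\alpha)$ columns of width absorb the surgery on $L$. Note also that your first guess about the pairing is wrong: the involution does not pair objects with $\ell(\alpha) \geq 1$ among themselves; an empty-shape pair with $\mu < r\cdot\omega$ is \emph{not} fixed and gets matched (via $\lambda \oplus (\mu \bmod r)$) with an object whose first row has length two. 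Your later, corrected description of the fixed-point set is the right one, and the final counting step via Lemma \ref{Fixed Points} (summing over $\omega_\eta(L) = k - i$) is correct as stated.
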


\begin{proof}
    We will now define a sign-reversing involution $\tau \colon X \rightarrow X$ where the fixed points are enumerated by the right-hand side of Equation \ref{SRI}.  
    Let $(\lambda, L) \in X$ and 
    let $\eta = rk+n+1$, $\omega = \omega_{\eta}(L)$, $\gamma = \gamma_\eta(L)$, $\mu = \mu_{\eta}(L)$, and if $\lambda \neq ()$ let $\nu(\lambda, L) = \eta -1 - r \cdot \omega - \lambda_{1, 1}$.  We will denote $\nu(\lambda, L)$ by $\nu$.  
    

      Define $\tau \colon X \rightarrow X$ by $\tau(\lambda ,L) = (\lambda', L')$ where 
    
\[\lambda'  = \begin{cases}
\lambda \oplus (\mu \mod r)& \text{if } \mu < r\cdot\omega, \\ \\

\lambda & \text{if } \mu \geq r\cdot\omega  \\
 & \text{and }\sh(\lambda) = (), \\ \\

\lambda^{\swarrow}  & \text{if } \mu \geq  r\cdot\omega,
|\lambda_1| \geq 2, \\
& \text{and } \gamma \leq \nu,\\ \\

\lambda + (\mu \mod r)  & \text{if } \mu \geq r\cdot\omega, |\lambda_1| \geq 2, \\
& \text{and } \gamma > \nu.

\end{cases}\]
and 

\[L'  = \begin{cases}
L_{[0, \gamma]}; (L_{[\gamma+1, \eta]} -1 ); (L_\eta - 1 + \lfloor \frac{\mu}{r} \rfloor); (L_{[\eta+1, \dots]} - 2)& \text{if } \mu < r \cdot \omega, \\ \\

L & \text{if } \mu \geq r\cdot\omega  \\
 & \text{and }\sh(\lambda) = (), \\ \\

L_{[0, \nu]}; 
(L_{[\nu+1, \eta]} +1 );
(L_{[\eta+2, \dots]}+2)  
& \text{if } \mu \geq r\cdot\omega, \\
& |\lambda_1| = 2, \\
& \text{and } \gamma \leq \nu, \\ \\ 

L_{[0, \nu]}; 
(L_{[\nu+1, \dots]} +1 )  & \text{if } \mu \geq r\cdot\omega, \\
& |\lambda_1| > 2, \\
& \text{and } \gamma \leq \nu, \\ \\ 

 L_{[0, \gamma]}; (L_{[\gamma+1, \dots]} -1 )  & \text{if } \mu \geq r\cdot\omega, |\lambda_1| \geq 2, \\
& \text{and } \gamma > \nu.

\end{cases}\]


To show that $\tau$ is an involution, we need to establish that $(\lambda'', L'') = \tau(\lambda', L') = (\lambda, L)$, but to evaluate $\tau(\lambda', L')$, it may be necessary to find the values of $\omega_{\eta}(L')$, $\gamma_{\eta}(L')$, $\mu_{\eta}(L')$, and $\nu(L', \lambda')$. We will denote these values as $\omega'$, $\gamma'$, $\mu'$, and $\nu'$ respectively. We note that $\tau$ will be a sign reversing involution since by Proposition \ref{signofalpha}, $\sign(\lambda)=-\sign(\lambda')$ for all $\lambda$ where $\lambda \neq \lambda'$. Additionally, we note that $\sign(\lambda)=1$ for all $\lambda$ such that $\lambda=\lambda'$.

\noindent\textbf{Case 1: $\mu < r \cdot \omega$.}

Since $\mu < r \cdot \omega$, we have that $L' = L_{[0, \gamma]}; (L_{[\gamma+1, \eta]} -1 ); (L_\eta - 1 + \lfloor \frac{\mu}{r} \rfloor); (L_{[\eta+1, \dots]} - 2)$ and $\lambda' = \lambda \oplus (\mu \mod r)$.  This means that $|\lambda_1'| = 2$ and $\lambda'_{1, 1} = (\mu \mod r)$.  

By definition of $\omega'$, we have that 
\[\omega' = L'_{\eta+1} - L'_\eta = \Bigl(L_\eta - 1 + \Bigl\lfloor\frac{\mu}{r}\Bigr\rfloor\Bigr) - \Big(L_\eta -1\Bigr) = \Bigl\lfloor\frac{\mu}{r}\Bigr\rfloor.\]

It follows from the construction of $L'$ that the ascent set of $L'_{[0, \eta]}$ is a subset of the ascent set of $L_{[0, \eta]}$. Therefore it follows that $\gamma' \leq \gamma$ and similarly we conclude that $\mu' \geq \mu$.  This is because 

\[\mu' = \eta - 1 - \gamma' \geq \eta - 1 - \gamma = \mu. \]

Finally, the value of $\nu' = \gamma$ because 
\[\nu' = \eta -1 - r\cdot \omega' - \lambda'_{1, 1} = \eta - 1 - \Bigl(r \cdot \Bigl\lfloor\frac{\mu}{r}\Bigr\rfloor + (\mu \mod r)\Bigr) = \eta - 1 - \mu = \gamma.\]

Combining the equations and inequalities above, we have that 
$\mu' \geq \mu \geq r \cdot \bigl\lfloor\frac{\mu}{r}\bigr\rfloor = r \cdot \omega'$,
$|\lambda'_1| = 2$, and $\gamma' \leq \nu'$. 

Therefore we have  $\lambda'' = \lambda'^{\swarrow} = \lambda$ and
\begin{align*} 
L'' &=  L'_{[0, \nu']}; (L'_{[\nu'+1, \eta]} +1 );(L'_{[\eta+2,  \dots]}+2) \\ 
 &=  L_{[0, \gamma]}; (L_{[\gamma+1, \eta]} - 1 +1 ); (L_{[\eta+1, \dots]} - 2+2) \\
 & = L.
\end{align*}

\noindent\textbf{Case 2: $\mu \geq r \cdot \omega$ and $\sh(\lambda) = ()$.}

In this case, $\tau(\lambda, L) = (\lambda, L)$ so $\tau(\tau(\lambda, L)) = (\lambda, L)$.

\noindent\textbf{Case 3: $\mu \geq r \cdot \omega$, $|\lambda_1| = 2$, and $\gamma \leq \nu$.}

In this case, we have that 
$L' = 
L_{[0, \nu]}; 
(L_{[\nu+1, \eta]} +1 );
(L_{[\eta+2, \dots]}+2)$ and $\lambda' = \lambda^\swarrow$.

Therefore we have 
\[\omega' = L'_{\eta+1}-L'_\eta = L_{\eta + 2} + 2 -(L_\eta + 1) = L_{\eta + 2} - L_\eta + 1.\] 

Since $L$ is weakly increasing, we have that $L_{\eta + 2} - L_\eta + 1 \geq L_{\eta + 1} - L_\eta + 1 = \omega + 1> \omega$.  That is to say, $\omega' \geq \omega + 1$

It follows from the construction of $L'$ that $\nu \in \asc(L')$ and that $\nu$ is the greatest element of the ascent set of $L'_{[0, \eta]}$.  This is because $\gamma \leq \nu$ and $\gamma$ is the greatest element of the ascent set of $L_{[0, \eta]}$.  Therefore we have that $\gamma' = \nu$.  Since $\mu' = \eta - 1 - \gamma'$, we have that $\mu' = \eta - 1 - \nu$.  But since $\nu = \eta - 1- r \cdot \omega - \lambda_{1, 1}$, we have that $\mu' = r\cdot \omega + \lambda_{1, 1}$.  Note that $\lambda_{1, 1} < r$ so we have that $r\cdot \omega + \lambda_{1, 1} < r \cdot(\omega + 1)$.  But since $\omega' \geq \omega+1$, we have $\mu' < r \cdot \omega'$.


Since $\mu' < r \cdot \omega'$ and $\bigl\lfloor \frac{\mu'}{r}\bigr\rfloor = \omega$, we have that $\lambda'' = \lambda' \oplus (\mu' \mod r) = \lambda' \oplus \lambda_{1, 1} = \lambda$.
\begin{align*} 
L'' &=  L'_{[0, \gamma']}; \Bigl(L'_{[\gamma'+1, \eta]} -1 \Bigr);\Bigl(L'_\eta - 1 + \Bigl\lfloor\frac{\mu'}{r}\Bigr\rfloor\Bigr);\Bigl(L'_{[\eta+1, \dots]}-2\Bigr) \\ 
 &=  L_{[0, \nu]}; (L_{[\nu+1, \eta]} + 1 -1 );(L_\eta+1 - 1 + \omega);(L_{[\eta+2, \dots]} + 2 - 2) \\
 & = L.
\end{align*}


\noindent\textbf{Case 4: $\mu \geq r \cdot \omega$, $|\lambda_1| > 2$, and $\gamma \leq \nu$.}

In this case, we have that $L' = L_{[0, \nu]};(L_{[\nu+1,\dots]}+1)$ and $\lambda' = \lambda^{\swarrow}$.
As $|\lambda_1| > 2,$ the smallest entry of the first row of $\lambda$, $\lambda_{1, 1}$, is removed to create $\lambda'$. Therefore, $\lambda_{1,1} < \lambda'_{1,1}$ which implies $\nu > \nu'.$ 
By construction of $L'$, $\omega' = \omega$ and $\asc(L') = \asc(L) \cup \{\nu\}$.  As a result, $\gamma'=\nu$. But as $\nu > \nu',$ we have that $\gamma' > \nu'.$  We also have that that $\mu' \geq r \cdot \omega'$ and $(\mu' \mod r) = \lambda_{1, 1}$ as $\mu' = \eta - 1- \gamma' = \eta - 1- \nu = r \cdot\omega + \lambda_{1, 1}$.

Since $\mu'\geq r \cdot \omega'$ and $\gamma' > \nu'$, we have 
\begin{align*} 
L'' &=  L'_{[0, \gamma']}; (L'_{[\gamma'+1, \dots]} - 1) \\ 
 &=   L_{[0, \nu]}; (L_{[\nu+1, \dots]} + 1 - 1)\\
 & = L
\end{align*}
and $\lambda'' = \lambda' + (\mu' \mod r) = \lambda' + \lambda_{1, 1} = \lambda$.


\noindent \textbf{Case 5: $\mu \geq r \cdot \omega$, $|\lambda_1| \geq 2$ and $\gamma > \nu$.}

In this case, we must first establish that $\lambda+(\mu \mod r)$ is defined.
Recall that by definition, $0\leq \lambda_{1, 1} <r$.  Since $\gamma > \nu$, we have that $\eta - 1 - r \cdot \omega - \lambda_{1, 1} < \gamma$ or equivalently, $r\cdot\omega + \lambda_{1, 1} > \mu$.  Since $\mu \geq r\cdot\omega$ and $\lambda_{1, 1} < r$, we have that $0 \leq(\mu \mod r) < \lambda_{1, 1}$ and that $(\mu \mod r) \notin \lambda_{1}$ since $\lambda_{1, 1}$ is the smallest entry of $\lambda_1$.   
Since $\mu \geq r \cdot \omega$, $|\lambda_1| \geq 2$ and $\gamma > \nu$, we have that $\lambda'=\lambda + (\mu \mod r)$ and $L'= L_{[0,\gamma]};(L_{[\gamma+1, \dots]}-1)$. Note that this means that $|\lambda'_1| >2$. By construction of $L'$ we have that $\omega' = \omega$ and that $\gamma' \leq \gamma$. This implies that $\mu' \geq \mu \geq r \cdot \omega'$.


We will show that $\gamma' \leq \nu'.$ In particular, since $\gamma' \leq \gamma$, we will show $\nu'=\gamma$.  Recall that $\mu <r \cdot \omega + \lambda_{1, 1}$ and that $\lambda_{1, 1} < r.$  This means that $r \cdot \omega \leq \mu <r(\omega+1)$, or equivalently, $\mu=r \cdot \omega + (\mu \mod r)$. Since $\omega' = \omega$ and $(\mu \mod r) = \lambda'_{1, 1}$, we have that $\gamma = \eta - 1- \mu = \eta - 1 - r\cdot\omega' - \lambda'_{1, 1} = \nu'$.  Therefore $\gamma' \leq \nu'$.

Since $\mu' \geq r \cdot \omega'$, $|\lambda'_1| > 2$, and $\gamma'\leq \nu'$, we have 
\begin{align*} 
L'' &=  L'_{[0, \nu']}; (L'_{[\nu'+1, \dots]} + 1) \\ 
 &=   L_{[0, \gamma]}; (L_{[\gamma+1, \dots]} - 1 + 1)\\
 & = L
\end{align*}
and $\lambda'' = \lambda'^{\swarrow} = (\lambda + \lambda'_{1, 1} )^{\swarrow} = \lambda$.


Therefore the only fixed points of $\tau$ are when $\mu \geq r \cdot \omega$ and $\sh(\lambda) = ()$.  Since the lattice path of each fixed point is an element of $\mathcal{L}(rk+n+1, k)$ and each fixed point has the same shape $\lambda$ with $\sign(\lambda)=1$, then to complete the proof, we must show that $|\{L \in \mathcal{L}(rk+n+1, k) \mid \mu_\eta(L) \geq r \cdot \omega_\eta(L)\}| = \sum_{i=0}^{k} \binom{(r+1)i+n}{i}$. This follows immediately from Lemma \ref{Fixed Points} and the fact that the value of $\omega_\eta(L)$ can take on any value from $\{0, \dots, k\}$.
\end{proof}



When grouping together labeled tableaux of the same shape, Theorem \ref{Big Boy} can be restated in the following way.

\begin{cor}\label{Big Boy Cor}

For all nonnegative integers $k$, $n$, and $r$, we have
\begin{equation}\label{ZC}
    \sum_{i=0}^k\binom{(r+1)i+n}{i} = \sum_{\alpha \in \mathcal{C}(k, r)} \sign(\alpha) \Biggl(\prod_{j=1}^{\ell(\alpha)}\binom{r}{\alpha_j-1}\Biggr)\binom{(r+1)k+n+1-|\alpha|+\ell(\alpha)}{k-|\alpha|}.
\end{equation}
\end{cor}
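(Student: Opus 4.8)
The plan is to obtain the corollary as a direct consequence of Theorem \ref{Big Boy} by regrouping the weighted sum over labeled lattice sequences according to shape. First I would note that the left-hand side of Equation (\ref{ZC}) is exactly the quantity $\sum_{i=0}^{k}\binom{(r+1)i+n}{i}$ that Theorem \ref{Big Boy} evaluates; hence Theorem \ref{Big Boy} already tells me that this sum equals the total weight $\sum_{(\lambda,L)\in X}\wt((\lambda,L))$, where $X$ is the set of labeled lattice sequences of type $(k,n,r)$. It then remains only to show that this total weight agrees with the right-hand side of Equation (\ref{ZC}).

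To do this, I would partition $X$ into blocks indexed by the shape $\alpha=\sh(\lambda)$. By the definition of a labeled lattice sequence, every $(\lambda,L)\in X$ has a shape $\alpha\in\mathcal{C}(k,r)$, and conversely each $\alpha\in\mathcal{C}(k,r)$ arises, so $\mathcal{C}(k,r)$ is precisely the index set of the blocks. The key observation is that the weight is constant on each block: since $\wt((\lambda,L))=\sign(\lambda)=\sign(\alpha)$, every element of the block indexed by $\alpha$ contributes $\sign(\alpha)$. This lets me factor the sign out block by block and write
\[
\sum_{(\lambda,L)\in X}\wt((\lambda,L)) = \sum_{\alpha\in\mathcal{C}(k,r)}\sign(\alpha)\cdot\bigl|\{(\lambda,L)\in X \mid \sh(\lambda)=\alpha\}\bigr|.
\]

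Finally, I would invoke Proposition \ref{Labeled Sequences}, which gives the size of each block as $\bigl(\prod_{j=1}^{\ell(\alpha)}\binom{r}{\alpha_j-1}\bigr)\binom{(r+1)k+n+1-|\alpha|+\ell(\alpha)}{k-|\alpha|}$. Substituting this block count into the displayed equation produces exactly the right-hand side of Equation (\ref{ZC}), which completes the argument.

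There is no real obstacle remaining: the substantive work has already been carried out in the sign-reversing involution of Theorem \ref{Big Boy} and in the enumeration of Proposition \ref{Labeled Sequences}. The only points needing a word of justification are that the weight depends on $\lambda$ only through its shape (immediate from $\sign(\lambda)=\sign(\alpha)$) and that partitioning by shape is legitimate because every shape lies in $\mathcal{C}(k,r)$; both are built into the definitions, so the corollary is a matter of bookkeeping rather than new ideas.
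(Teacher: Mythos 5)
Your proposal is correct and follows exactly the paper's own route: the paper proves this corollary by citing Proposition \ref{Labeled Sequences}, i.e.\ by grouping the labeled lattice sequences of Theorem \ref{Big Boy} according to the shape $\alpha = \sh(\lambda)$, noting the weight $\sign(\alpha)$ is constant on each block, and substituting the block count. You have simply written out the bookkeeping that the paper leaves implicit.
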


\begin{proof}
This follows immediately from Proposition \ref{Labeled Sequences}. 
\end{proof}

When $r=0$, since the only element of $\mathcal{C}(k, 0)$ is the empty composition, Corollary \ref{Big Boy Cor} gives the Hockey Stick Identity (Equation \ref{Hockey Stick Identity}).

\begin{cor}
\[\sum_{i=0}^k\binom{i+n}{i} = \binom{k+1+n}{k}.\]
\end{cor}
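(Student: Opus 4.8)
The plan is to specialize $r = 0$ in Corollary \ref{Big Boy Cor} (Equation \ref{ZC}) and check that every piece of the right-hand side collapses to a single term. First I would address the left-hand side, which is immediate: setting $r = 0$ turns $\binom{(r+1)i+n}{i}$ into $\binom{i+n}{i}$, so the sum on the left becomes exactly $\sum_{i=0}^k \binom{i+n}{i}$, matching the left-hand side of the claim.

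The key step is to identify the index set $\mathcal{C}(k, 0)$. Recall that $c(j, r)$ consists of the compositions $\alpha \vDash j$ whose parts satisfy $2 \leq \alpha_i \leq r+1$. When $r = 0$ this constraint reads $2 \leq \alpha_i \leq 1$, which no positive part can meet, so $c(j, 0) = \varnothing$ for every $j \geq 1$. The lone exception is $j = 0$, where the empty composition $()$ is the unique composition of $0$ and satisfies the part constraints vacuously. Hence $\mathcal{C}(k, 0) = \bigcup_{j=0}^k c(j, 0) = \{()\}$, and the sum on the right of Equation \ref{ZC} has a single summand.

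Finally I would evaluate that one summand at $\alpha = ()$. Since $|()| = \ell(()) = 0$, we get $\sign(()) = (-1)^{0 - 0} = 1$; the product $\prod_{j=1}^{0}\binom{0}{\alpha_j - 1}$ is an empty product equal to $1$; and the binomial coefficient becomes $\binom{(0+1)k + n + 1 - 0 + 0}{k - 0} = \binom{k+n+1}{k}$. Multiplying these together yields $\binom{k+n+1}{k}$ for the right-hand side, which is precisely the Hockey Stick Identity (Equation \ref{Hockey Stick Identity}). There is no real obstacle here—the argument is a one-line specialization—so the only thing worth verifying carefully is that $\mathcal{C}(k, 0)$ genuinely reduces to the single empty composition and that the sign, the empty product, and the binomial coefficient each take the stated values on $()$.
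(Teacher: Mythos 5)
Your proposal is correct and follows exactly the paper's own route: the paper likewise derives this corollary by setting $r=0$ in Corollary \ref{Big Boy Cor} and observing that $\mathcal{C}(k,0)$ contains only the empty composition, so the right-hand side collapses to the single term $\binom{k+n+1}{k}$. Your write-up simply makes explicit the verification of the sign, the empty product, and the binomial coefficient that the paper leaves to the reader.
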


When $r=1$, Corollary \ref{Big Boy Cor} gives the Big Hockey Stick and Pucks Identity (Equation \ref{Big Hockey Stick}). This is because the only compositions in $\mathcal{C}(k, 1)$ are those that consist only of 2's and there is a unique way to label each tableau.
\begin{cor}
\[\sum_{i=0}^k\binom{2i+n}{i} = \sum_{i=0}^{\lfloor\frac{k}{2}\rfloor}(-1)^i\binom{2k+1+n-i}{k-2i}.\]
\end{cor}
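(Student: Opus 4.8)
The plan is to obtain the identity as the $r=1$ specialization of Corollary~\ref{Big Boy Cor}, so the entire argument reduces to simplifying the right-hand side of Equation~\eqref{ZC} once the parameter $r$ is fixed to $1$. First I would describe the indexing set $\mathcal{C}(k,1)$ explicitly. By definition, a composition $\alpha \in \mathcal{C}(k,1)$ has every part satisfying $2 \leq \alpha_i \leq r+1 = 2$, so each part is forced to equal $2$. Hence the only elements of $\mathcal{C}(k,1)$ are the compositions $(2,2,\dots,2)$ with $j$ parts, one for each $j$ with $2j \leq k$; that is, $\mathcal{C}(k,1) = \{(2^{\,j}) : 0 \leq j \leq \lfloor k/2\rfloor\}$, where $(2^{\,0})$ denotes the empty composition. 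This allows me to replace the sum over $\alpha$ by a sum over the single integer parameter $j = \ell(\alpha)$.

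Next I would evaluate each factor appearing in the summand of Equation~\eqref{ZC} at $\alpha = (2^{\,j})$. For such a composition we have $|\alpha| = 2j$ and $\ell(\alpha) = j$, so $\sign(\alpha) = (-1)^{|\alpha|-\ell(\alpha)} = (-1)^{2j-j} = (-1)^j$. The product of binomial coefficients collapses because $\binom{r}{\alpha_i-1} = \binom{1}{1} = 1$ for every part, giving $\prod_{i=1}^{\ell(\alpha)}\binom{1}{\alpha_i-1} = 1$; equivalently, by Proposition~\ref{numberLT} each such shape admits exactly one labeling. Finally the trailing binomial coefficient simplifies to
\[
\binom{(r+1)k+n+1-|\alpha|+\ell(\alpha)}{k-|\alpha|} = \binom{2k+n+1-2j+j}{k-2j} = \binom{2k+1+n-j}{k-2j}.
\]

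Substituting these three evaluations back and letting $j$ range over $\{0,\dots,\lfloor k/2\rfloor\}$ turns the right-hand side of Equation~\eqref{ZC} into $\sum_{j=0}^{\lfloor k/2\rfloor}(-1)^j\binom{2k+1+n-j}{k-2j}$, while the left-hand side becomes $\sum_{i=0}^k\binom{2i+n}{i}$ upon setting $r=1$; renaming the summation index $j$ to $i$ yields exactly the claimed identity. There is no serious obstacle here, since Corollary~\ref{Big Boy Cor} already carries all of the combinatorial content. The only point requiring care is the explicit determination of $\mathcal{C}(k,1)$ and the accompanying change of summation variable from the composition $\alpha$ to its length $j$, together with checking that the upper limit $\lfloor k/2\rfloor$ correctly encodes the constraint $|\alpha| = 2j \leq k$.
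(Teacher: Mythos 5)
Your proposal is correct and follows essentially the same route as the paper: the paper also derives this as the $r=1$ specialization of Corollary~\ref{Big Boy Cor}, observing that $\mathcal{C}(k,1)$ consists exactly of the compositions $(2,2,\dots,2)$ and that each such tableau has a unique labeling. Your write-up simply makes explicit the sign, product, and binomial-coefficient computations that the paper leaves to the reader.
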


%

We conclude with an example to help illustrate which terms appear in the right-hand side of Equation \ref{ZC}.

\begin{example}
When $r=3$, $k=5$, and $n=0$, Equation \ref{ZC} says that 
\begin{equation}\label{ZC example eq}
   \sum_{i=0}^5\binom{4i}{i} =
\binom{21}{5}-3\binom{20}{3}+3\binom{19}{2}-\binom{18}{1}+3^2\binom{19}{1} -  3^2 \binom{18}{0} -  3^2 \binom{18}{0}. 
\end{equation}

Each term on the right-hand side of Equation \ref{ZC example eq} corresponds to one of compositions listed in Example \ref{comps}.  In Figure \ref{fig:GigaTriangle}, the terms from the left-hand side of Equation \ref{ZC example eq} are circled in brown and the terms from the right-hand side are circled in gray.

\end{example}

\begin{figure}
    \centering
\begin{tikzpicture}[scale = .75]
\foreach \i in {0, ..., 5}{
\draw[color=dbrown, ultra thick, fill=brown!35] (-\i,-4*\i) circle [radius=.4];
}
\draw[color=gray, ultra thick, fill = gray!35] (-5.5,-21) circle [radius=.4]; 
\draw[color=gray, ultra thick, fill = gray!35] (-7,-20) circle [radius=.4]; 
\draw[color=gray, ultra thick, fill = gray!35] (-7.5,-19) circle [radius=.4]; 
\draw[color=gray, ultra thick, fill = gray!35] (-8,-18) circle [radius=.4]; 
\draw[color=gray, ultra thick, fill = gray!35] (-8.5,-19) circle [radius=.4]; 
\draw[color=gray, ultra thick, fill = gray!35] (-9,-18) circle [radius=.4]; 

\foreach \n in {0,...,21} {
  \foreach \k in {0,...,\n} {
    \node at (\k-\n/2,-\n) {$\binom{\n}{\k}$};
    }
    }
\end{tikzpicture}
    \caption{Equation \ref{ZC} when $r=3$, $k=5$, and $n=0$.}
    \label{fig:GigaTriangle}
\end{figure}

\bibliography{references}{}
\bibliographystyle{plain}

\noindent
(Concerned with sequences 
\href{https://oeis.org/A007318}{A007318},
\href{https://oeis.org/A000027}{A000027}, 
\href{https://oeis.org/A000217}{A000217},
\href{https://oeis.org/A000292}{A000292},
\href{https://oeis.org/A000332}{A000332},
\href{https://oeis.org/A000389}{A000389},
\href{https://oeis.org/A000579}{A000579},
\href{https://oeis.org/A006134}{A006134},
\href{https://oeis.org/A079309}{A079309},
\href{https://oeis.org/A057552}{A057552},
\href{https://oeis.org/A371965}{A371965},
\href{https://oeis.org/A371964}{A371964},
\href{https://oeis.org/A371963}{A371963},
\href{https://oeis.org/A188675}{A188675},
\href{https://oeis.org/A263134}{A263134},
\href{https://oeis.org/A087413}{A087413},
\href{https://oeis.org/A225612}{A225612}, and
\href{https://oeis.org/A225615}{A225615}.)

\end{document}